\theoremstyle{plain}
\newtheorem{theorem}{Theorem}[section]
\newtheorem{lemma}[theorem]{Lemma}
\newtheorem{corollary}[theorem]{Corollary}
\theoremstyle{definition}
\begin{document}

\renewcommand{\datename}{}

\renewcommand{\abstractname}{Abstract}
\renewcommand{\refname}{References}
\renewcommand{\tablename}{Table}
\renewcommand{\figurename}{Figure}
\renewcommand{\proofname}{Proof}

\title[Existence of a smooth circle action near parabolic orbits]{Existence of a smooth Hamiltonian circle action near parabolic orbits}

\author{E. Kudryavtseva$^{\diamond,\star}$}

\author{N. Martynchuk$^{\dagger, \star}$}

\thanks{
Affiliations:\\
$^\diamond$ Faculty of Mechanics and Mathematics, Moscow State University, Moscow 119991, Russia \\
$^\star$ Moscow Center for Fundamental and Applied Mathematics, Moscow, Russia \\
$^\dagger$ Bernoulli Institute for Mathematics, Computer Science and Artificial Intelligence, University of Groningen, P.O. Box 407, 9700 AK Groningen, The Netherlands. \\
\textit{E-mail: eakudr@mech.math.msu.su, n.martynchuk@rug.nl}}

  \begin{abstract}
  We show that every parabolic orbit of a two-degree of freedom integrable system admits a $C^\infty$-smooth Hamiltonian
circle action, which is persistent under small  integrable $C^\infty$ perturbations.
We deduce from this result the structural stability of parabolic orbits and show that they are all smoothly
equivalent (in the non-symplectic sense) to a standard model. Our proof is based on showing that
  every symplectomorphism of a neighbourhood  of a parabolic point preserving the integrals of 
  motion is Hamiltonian whose generating function is smooth and constant on the connected components of the common level sets.

\hspace{-5.7mm} \textit{Keywords}: Liouville integrability; parabolic orbit; circle action; structural stability; normal forms.

\hspace{-5.7mm} \textit{Subject classification}: 37J35, 53D12, 53D20, 70H06
 
\end{abstract}

\maketitle

\section{Introduction}

Parabolic orbits of integrable two-degree of freedom Hamiltonian systems are one of the simplest
examples of degenerate singularities. A typical example of a parabolic orbit is given
by the (singular) fibration:
\begin{equation} \label{eq/fibration}
F = (H, J) \colon S^1 \times \mathbb R^3 \to \mathbb R^2,
\end{equation}
where
$H = x^2 - y^3 + \lambda y$ and $J = \lambda$;
here $x,y,\lambda$ are Euclidean coordinates on $\mathbb R^3$. If $\varphi$ denotes the standard angle coordinate
on $S^1$, then the symplectic structure
can be of the form
$$
\omega_0 = dx\wedge dy + d\lambda\wedge d\varphi
$$
or, more generally, 
\begin{equation} \label{eq/symplectic_structure}
\omega = g(x,y,\lambda)dx\wedge dy + d\lambda\wedge(d\varphi+A(x,y,\lambda)dx + B(x,y,\lambda)dy),
\end{equation}
where $g = g(x,y,\lambda), A= A(x,y,\lambda)$ and $B = B(x,y,\lambda)$ are smooth functions. We note that in this paper,
we consider integrable systems that are (at least) of the $C^\infty$ differentiability class; in particular, the Hamiltonian 
and the first integrals as well as the symplectic form are always assumed to be (at least) smooth.

The singular fibration $F \colon S^1 \times \mathbb R^3 \to \mathbb R^2$ is schematically shown in Fig.~\ref{fig/standard_model}, together with the corresponding \textit{bifurcation diagram}, which is the set of the critical values of $F$, and \textit{bifurcation complex} --- the space of the connected components of $F$.

\begin{figure}[htbp]
\begin{center}
\includegraphics[width=0.94\linewidth]{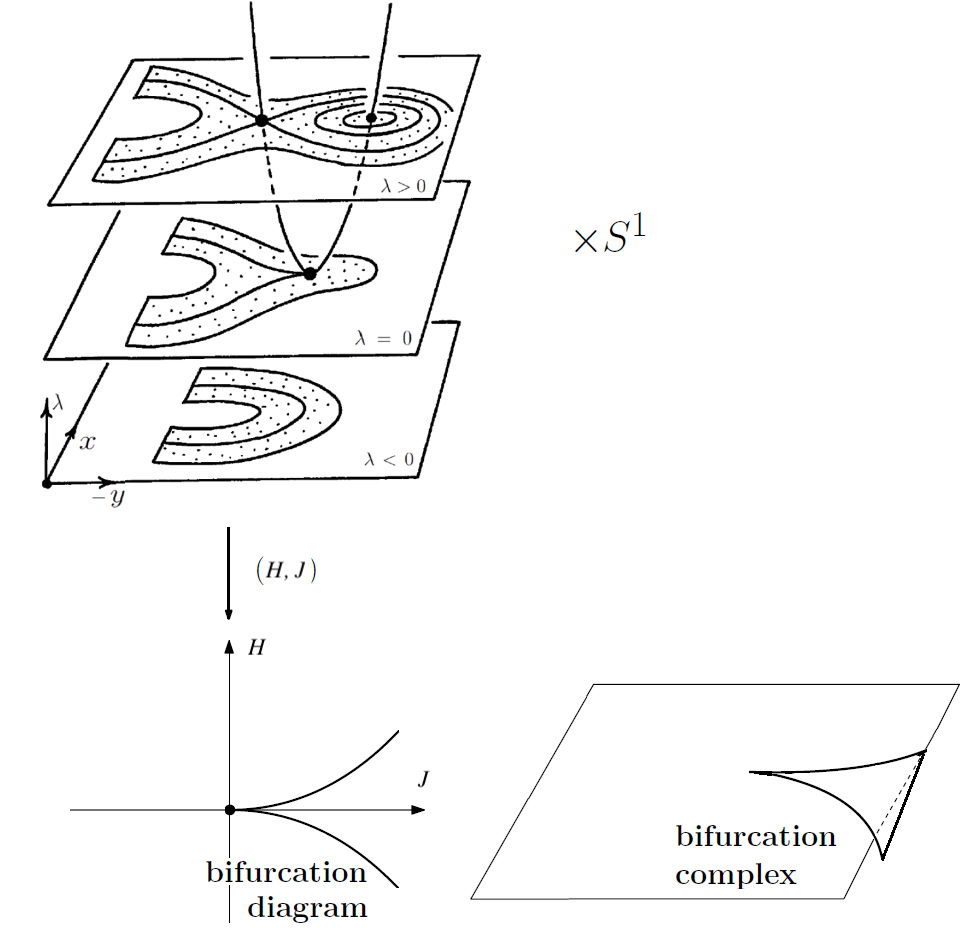}
\caption{The Liouville fibration (top), the bifurcation diagram (bottom left) and the bifurcation complex (bottom right) of the energy-momentum map $(H,J)$}
\label{fig/standard_model}
\end{center}
\end{figure}

As we will show  in this paper, the fibration $F \colon S^1 \times \mathbb R^3 \to \mathbb R^2$ is, in fact, a `standard'
model of a parabolic orbit in the sense that a neighbourhood of such an  orbit can always be put into 
the  form \eqref{eq/fibration}-\eqref{eq/symplectic_structure}. We note that such a
result is well-known when we make the additional assumption that
the integrable system admits a smooth and free Hamiltonian circle action near a parabolic orbit. 
(In the above model \eqref{eq/fibration}-\eqref{eq/symplectic_structure}, the Hamiltonian circle action is given by the 
periodic integral $J$.) It is also known  that parabolic orbits admitting
a Hamiltonian circle action 
are \textit{smoothly structurally stable} in the space of integrable systems 
with such an action; see \cite{Lerman1994, Kalashnikov1998}. The main motivation for the present work is to remove the extra assumption
on the existence of a smooth circle action in the above results.

We note that it is not difficult to show that in a neighborhood of a parabolic orbit, excluding the orbit itself, a smooth circle action always exists. Indeed, it is given by the flow of the periodic first integral
$$
J = \int_{c} \alpha,
$$
where $\alpha$ is a primitive one-form  and $c \subset F^{-1}(f)$ is a cycle  homologous to the parabolic orbit. The smoothness of $J$ (equivalently, of the circle action) follows \cite{Deverdiere1979,Miranda2004} 
from the non-degeneracy of corank 1 singularities on the complement of the parabolic orbit. The main problem is therefore to prove the smoothness  of the periodic integral $J$ near  the parabolic orbit itself. We remark that
in the analytic category,  the corresponding result is known: $J$ and the  circle action are analytic in the case when the integrals and symplectic form are analytic \cite{Zung2000}.
It follows that the analytic equivalence of parabolic orbits and
their analytic structural stability hold
 without the additional assumption on the existence
of a circle action \cite{Zung2000, Kudryavtseva2020, Bolsinov2018}. The same can be said about the topological equivalence
and topological structural stability \cite{Lerman1994, Kalashnikov1998} (one can show this independently, even
without proving the existence of a $C^0$ circle action). What has remained
open until now is whether or not the corresponding results are also true in the smooth $C^\infty$ situation.  

In the present paper, we prove that this is indeed the case. More specifically, we show that 
every parabolic orbit of an integrable two-degree of freedom system admits a smooth free 
Hamiltonian circle action. We deduce from this result that 

i) from the smooth point of view, all parabolic orbits are equivalent, i.e. any two such orbits admit fiberwise diffeomorphic neighbourhoods (which is the direct product of a `standard' 3-dimensional Poincar\'e cross-section and a circle; see Fig.~\ref{fig/standard_model});

ii) parabolic orbits are smoothly  structurally stable 
in the space of all smooth 2-degree of freedom integrable systems (this means that a small integrable $C^\infty$ perturbation of a parabolic singularity is again a parabolic singularity, which is moreover
fiberwise diffeomorphic to the unperturbed one).

The main ingredient in our proof is to show that
any $F$-preserving symplectomorphism of a neighbourhood of a parabolic \textit{point}\footnote{This is a rank-one singular point that locally admits the (non-canonical) coordinates $(x,y,\lambda,\varphi)$ as above.} (and therefore also of a parabolic orbit) is, in fact, Hamiltonian whose generating function is constant on the connected components of the common level sets $\{F= f\}, f \in \mathbb R^2.$  This implies that any such symplectomorphism is smoothly isotopic to the identity in the class of $F$-preserving symplectomorphisms.

We note that a similar result is known for elliptic, non-degenerate corank 1, and focus-focus singularities \cite{Deverdiere1979, Eliasson1990,Miranda2004, VuNgoc2013}. However, it is false in general: the symplectomorphism $(x,y) \mapsto (-y,x)$ of $(\mathbb R^2, dx \wedge dy)$
preserves the function $H = x^4+y^4$, but the corresponding generating function
is not smooth (no even $C^2$ differentiable) at the origin. This means that this symplectomorphism cannot be included into a smooth $H$-preserving Hamiltonian flow. In fact, it cannot be connected to the identity by a smooth (or even $C^3$) $H$-preserving homotopy. This shows that in the context of integrable systems, the problem of the inclusion of a smooth or analytic (symplectic) map into a smooth/analytic flow 
(cf.~\cite{Saulin2016, Pronin1997} and references therein)
does not admit a universal solution, even in the case of polynomial first integrals.

\section{Main results}

In this section, we prove that a neighbourhood of a parabolic orbit of a two degree of freedom system admits a free
Hamiltonian circle action (and, in particular, a periodic integral) in the smooth $C^\infty$ case. Such a result will be used in a subsequent work on the symplectic classification of
parabolic orbits and cuspidal tori in the smooth category \cite{KudryavtsevaMartynchuk2021}; cf.~work \cite{Bolsinov2018} for the analytic case.

Let $(\tilde{H}, G) \colon U \to \mathbb R^2$ be an integrable system with a parabolic orbit $\beta$ (for a formal definition of a parabolic orbit, see \cite[Definition 2.1]{Bolsinov2018}). Assume $dG$ is
non-zero along $\beta$. Then near each point
$P \in \beta$, one can introduce (non-canonical) coordinates $(x,y,\lambda,\varphi) \in D^4$ (note that here $\varphi$ is only a local coordinate) such that
$$
H(\tilde{H},G) = x^2 - y^3 + \lambda y \mbox{ and } G = \lambda
$$
and the symplectic structure has the form \cite{Bolsinov2018}
$$
g(x,y,\lambda)dx\wedge dy + d\lambda\wedge(d\varphi+A(x,y,\lambda)dx + B(x,y,\lambda)dy).
$$
The Hamiltonian flow of $G$ gives rise to the first return map $\mu \colon D^3 \to D^3$, where $D^3$ is a cross-section
given by $\varphi = 0$. The map $\mu$ is smooth. Our goal is to first prove the following.

\begin{theorem} The first return map $\mu$ can be written as
the time-1 map of a smooth Hamiltonian vector field with respect to the symplectic structure 
$g(x,y,\lambda)dx\wedge dy$, where $\lambda$ is regarded as a parameter.
\end{theorem}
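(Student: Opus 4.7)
The strategy is to construct $K$ directly, using the invariants of $\mu$. I would first verify that $\mu$ preserves $\lambda$ (immediate, since $G=\lambda$ is the Hamiltonian of the defining flow), preserves $H$ (from $\{H,G\}=0$), and, on each slice $\{\lambda=\mathrm{const}\}$ of the cross-section $D^3 = \{\varphi = 0\}$, preserves the area form $g(x,y,\lambda)\,dx\wedge dy$. The last claim is obtained by pulling $\omega$ back to $D^3$ and restricting to a $\lambda$-slice: a short computation shows that $\mu^*(i^*\omega) - i^*\omega$ equals $dT\wedge d\lambda$ (where $T$ is the first-return time), and the obstruction vanishes when $d\lambda$ is restricted to a slice. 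Thus $\tilde\mu_\lambda := \mu|_{\{\lambda\}}$ is an area-preserving diffeomorphism of a 2-disk preserving $H_\lambda = x^2 - y^3 + \lambda y$, and the problem reduces to finding a smooth $K(x,y,\lambda)$ whose slice-wise Hamiltonian vector field $X_{K_\lambda}$ has time-1 flow equal to $\tilde\mu_\lambda$.

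On the regular locus of $H_\lambda$, the level curves are smooth and $\tilde\mu_\lambda$ shifts each point along its level component by a ``translation time'' $\tau(x,y,\lambda)$ measured against the Hamiltonian flow of $X_{H_\lambda}$. Area-preservation together with invariance of $H_\lambda$ force $\tau$ to be constant on each connected component of each regular level set, so on each such component I can define $K_\lambda$ as an antiderivative of $\tau$ with respect to $H_\lambda$, giving $X_{K_\lambda} = \tau\cdot X_{H_\lambda}$. This produces a smooth Hamiltonian on the regular locus whose time-1 flow agrees with $\tilde\mu_\lambda$. The real content of the theorem is then the smooth extension of $K$ across the critical points of $H_\lambda$ --- the non-degenerate elliptic center and hyperbolic saddle that appear for $\lambda>0$ and collide at the parabolic point at $\lambda = 0$ --- and simultaneously across the bifurcation $\lambda = 0$ where the topology of the level sets of $H_\lambda$ changes.

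The main obstacle is precisely this smooth extension. For a generic $H_\lambda$-preserving area-preserving map, the translation time $\tau$ has logarithmic singularities at hyperbolic saddles, which would ruin smoothness of $K$. Here, however, $\tilde\mu_\lambda$ is not arbitrary: it is the $\lambda$-slice of a smooth first-return map. At a non-degenerate critical point $p$ of $H_\lambda$, the linearization $D\tilde\mu_\lambda(p)$ is a symplectic matrix preserving the Hessian of $H_\lambda$, so it lies in the 1-parameter subgroup of $\mathrm{Sp}(2,\mathbb R)$ generated by the linearized Hamiltonian vector field; this pins the asymptotic value of $\tau$ at the saddle to a finite number $t_0(\lambda)$ and kills the would-be log divergence. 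Combined with Eliasson/Miranda-type local normal forms, this yields smoothness of $K$ at the elliptic center and hyperbolic saddle. The degenerate parabolic point admits no such normal form, and this is where one invokes the central technical result advertised in the introduction: every $F$-preserving symplectomorphism of a neighbourhood of a parabolic point is Hamiltonian, with generating function smooth and constant on the connected components of common level sets. Applied in the present parameterized setting, that result provides the smoothness of $K$ across both the parabolic point $(0,0,0)$ and the bifurcation $\lambda = 0$, completing the construction.
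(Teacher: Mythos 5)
Your setup is sound and parallels the first part of the paper's argument: $\mu$ preserves $\lambda$, $H$, and the slice-wise area form; on the regular locus it shifts points along level components by a translation time $\tau$ that is constant on those components; and the Hamiltonian $K$ is recovered by integrating $\tau$ against $H_\lambda$. Smoothness of $\tau$ near the non-degenerate elliptic and hyperbolic families for $\lambda>0$ can indeed be extracted from Colin de Verdi\`ere--V\'ey/Eliasson/Miranda-type normal forms, which is also how the paper treats the interior of the swallow-tail stratum.

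The gap is at the decisive step. The ``central technical result advertised in the introduction'' --- that every $F$-preserving symplectomorphism of a neighbourhood of a parabolic point is Hamiltonian with a smooth generating function constant on the connected components of the fibers --- is not an external input you may invoke: it \emph{is} the statement being proved. The first return map is exactly such a symplectomorphism of the reduced cross-section $(D^3, g\,dx\wedge dy)$, and ``time-1 map of a smooth Hamiltonian vector field'' is exactly the Hamiltonian property with smooth generating function; the paper proves the quoted claim \emph{by} proving this theorem, not the other way around. So your appeal to it for the smoothness of $K$ at the parabolic point $(0,0,0)$ and across the bifurcation $\lambda=0$ is circular, and no argument is supplied for precisely the degenerate locus where all of the difficulty is concentrated (as you yourself note, no Eliasson-type normal form is available there). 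The paper closes this by an explicit computation: it writes the translation time along the Lagrangian section $\{x=0\}$ as the integral
$$
u(y,\lambda) = \eta\int_y^{\mu^y(0,y,\lambda)} \dfrac{g\bigl(\eta\sqrt{t^3-\lambda t + \lambda y - y^3},t,\lambda\bigr)}{2\sqrt{t^3-\lambda t + \lambda y - y^3}}\,dt,
$$
proves via the substitution $t=y+z^2(\mu^y-y)$ and repeated applications of Hadamard's lemma that $u$ extends smoothly across the parabola $\lambda=3y^2$ (Lemma~\ref{lemma/smoothness}), and then glues the two strata of the bifurcation complex by showing all partial derivatives of $\tilde u$ and $\tilde u_\circ$ match along the common boundary and extend continuously to the origin (a Whitney-type argument). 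Some version of this explicit analysis at the degenerate point is unavoidable; your proposal omits it entirely.
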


\begin{figure}[htbp]
\begin{center}
\includegraphics[width=0.5\linewidth]{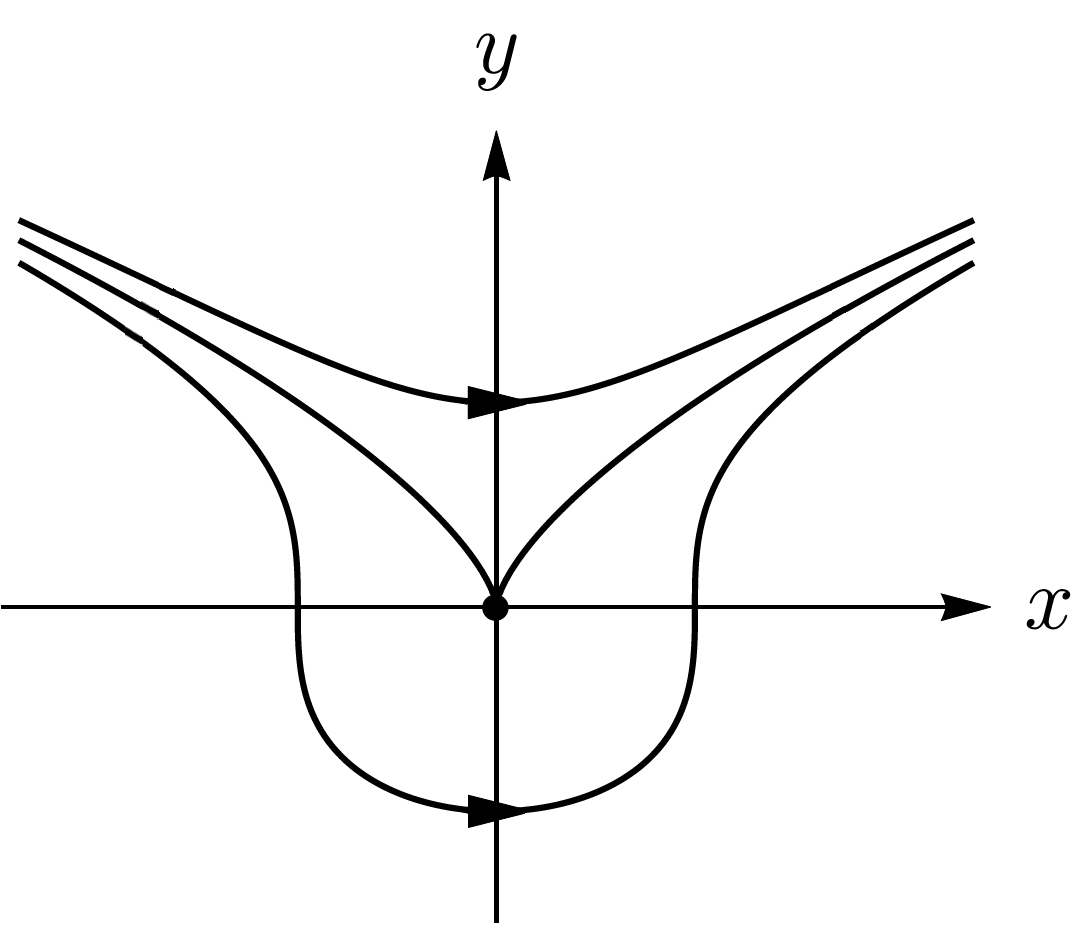}
\caption{The slice of the Liouville fibration for $\lambda = 0$}
\label{fig/lambda0}
\end{center}
\end{figure}

\begin{proof}
Step 1. Consider the  family 
of Lagrangian sections: 
$$L_\lambda = \{(x,y,\lambda) \colon x = 0\}$$
and its image under $\mu$. 
Since $\mu$ is a diffeomorphism preserving the functions $H$ and $G$, the fixed point set of
$\mu$ contains
the parabola $\{x = 0, 3y^2-\lambda = 0\}$; see Fig.~\ref{fig/standard_model}. 

Let $\mu^x$ and
$\mu^y$ denote the $x$- and the $y$-components of $\mu$, respectively. It can be shown
(using that $\mu$  preserves the functions $H$ and $G$, and that the $y$-axis and, hence, its $\mu$-image are `squeezed' between the two branches of the invariant level set $\{\lambda=H=0\}$, see Fig.~\ref{fig/lambda0}) 
that
$\mu^y(0,y,\lambda)$ is monotone 
with respect to $y$ for all small $(y,\lambda)$.
The monotonicity implies that the following formula
$$
u(y,\lambda) = \eta\int_y^{\mu^y(0,y,\lambda)} \dfrac{g(\eta\sqrt{t^3-\lambda t + \lambda y - y^3},t,\lambda)dt}{2\sqrt{t^3-\lambda t + \lambda y - y^3}},
$$
where $\eta = \textup{sign}(\mu^x(0,y,\lambda))$, is well defined for $\lambda \ne 3y^2$; see Fig.~\ref{fig/lambdagreater0}.
We claim that $u = u(y,\lambda)$ extends to a smooth function in a neighbourhood of the origin; this is the content of Lemma below.

\begin{figure}[htbp]
\begin{center}
\includegraphics[width=0.5\linewidth]{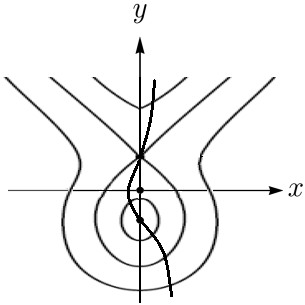}
\caption{The slice of the Liouville fibration for fixed $\lambda > 0$ and the $\mu$-image of the $y$-axis}
\label{fig/lambdagreater0}
\end{center}
\end{figure}

Observe that $u = u(y,\lambda)$ admits a natural extension to a function $\hat{u} = \hat{u}(x,y,\lambda)$ that is
constant on the connected components of $\{H = h, G = \lambda\}$; the function $\hat{u}$ is defined by the condition
$\hat{u}(0,y,\lambda) = u(y,\lambda)$. We claim that $\hat{u} = \hat{u}(x,y,\lambda)$ is also smooth. The required family of Hamiltonian vector fields is then defined by
$$
\hat{u}(x,y,\lambda) X_H,
$$
where $X_H$ denotes the Hamiltonian vector field of the function $H$ with respect to the symplectic structure $g(x,y,\lambda) dx\wedge dy$; recall that here $\lambda$ appears as a parameter.

Step 2. To show $\hat{u} = \hat{u}(x,y,\lambda)$ is smooth, observe that it can be written as $\tilde{u}(H,\lambda)$ and 
$\tilde{u}_\circ(H,\lambda)$ on the closures of each of the two open strata
of the bifurcation complex; see Fig.~\ref{fig/standard_model}.
We will first show\footnote{In fact, to prove $\hat{u}$ is smooth, we will need less information from 
$\tilde{u}_\circ$: it suffices to show $\tilde{u}_\circ$ is smooth for $\lambda > 0$ and that its partial derivatives have a continuous limit at $(H = 0, \lambda = 0)$. We will still need the well-known property of nondegenerate singularities that all partial derivatives of $\tilde{u}$  and $\tilde{u}_{\circ}$ (more precisely, their limits) exist and coincide on the hyperbolic branch $(\lambda > 0, H=2(\lambda/3)^{3/2})$, while all partial derivatives of $\tilde u^\circ$ continuously extend to the elliptic branch $(\lambda>0,\ H=-2(\lambda/3)^{3/2})$ of the bifurcation complex.} in steps 2 and 3
  that the functions $\tilde{u}(H, \lambda)$  and $\tilde{u}_\circ(H, \lambda)$ are smooth on these 
closures (in the sense that each of these functions admit a smooth extension to an open neighbourhood 
of the closure of the corresponding stratum, or equivalently, to $\mathbb R^2$).
Moreover,
we shall show that the corresponding partial derivatives of $\tilde u$ and $\tilde u_\circ$ 
 coincide on the `common boundary' 
$(\lambda\ge0, H=2(\lambda/3)^{3/2})$ of the two strata.

Consider the stratum that is not the swallow-tail domain, and let $\tilde{u}(H, \lambda)$ be the corresponding function defined on it. The smoothness of $\tilde{u}(H, \lambda)$ follows readily from the formula
$$
\tilde{u}(H(\varepsilon,y,\lambda),\lambda)  = \int_y^{\mu^y(\varepsilon,y,\lambda)} \dfrac{g(\sqrt{t^3-\lambda t - y^3+\lambda y},t,\lambda)dt}{2\sqrt{t^3-\lambda t - y^3+\lambda y}};
$$
recall that $\mu^y$ is the $y$-component of $\mu$. Indeed, the right hand side is smooth
as a function of $(y,\lambda)$ since $x = \varepsilon > 0$. Furthermore, at the point 
$(x = \varepsilon, y =  \varepsilon^{2/3}, \lambda = 0)$, we have $H = 0 $, but 
$\partial_yH = \lambda - 3y^2 \ne 0$. So we can take $H$ as a local coordinate instead of $y$.

Now consider the swallow-tail stratum, on which $\tilde{u}_\circ(H, \lambda)$ is defined. Then we have smoothness 
at least in the open half-plane $\lambda > 0$ (in the above sense), since the singularities are $2$D non-degenerate; cf.~\cite{Deverdiere1979} and \cite[Corollary 3.5]{Miranda2004}. Indeed,
near the elliptic family,
this can be shown separately, and near the hyperbolic family, this can be shown using
the Lagrangian section $y = 0$ transversal to the fibers. We note that using the section $y = 0$,
we also have that the partial derivatives of $\tilde{u}$ and $\tilde{u}_\circ$ 
 coincide on the set
$(\lambda > 0, H=2(\lambda/3)^{3/2})$. Let us now prove that the partial derivatives of $\tilde{u}_\circ(H, \lambda)$
extend continuously to the origin. We will then use this to prove $\hat u$ is smooth (and also that the
function $\tilde{u}_\circ(H, \lambda)$ admits a smooth extension, which, as we have noticed earlier,
is not really needed for our purposes).

Step 3. To this end, consider again the case $\lambda > 0$ and observe that
$$
\partial_yu = \left\{ \begin{array}{ll}
\partial_H\tilde u_\circ|_{(-y^3+\lambda y,\lambda)} (-3y^2+\lambda), & -2\sqrt{\lambda/3} < y < \sqrt{\lambda/3}; \\
\partial_H\tilde u|_{(-y^3+\lambda y,\lambda)} (-3y^2+\lambda), & \mbox{otherwise,} \end{array} \right.
$$
where the left hand side is a smooth function for all $(y,\lambda)$ by the Lemma. It follows that 
$$
\partial_yu = 0 \mbox{ for } \lambda = 3 y^2.
$$
Hence, by Hadamard's lemma, $\partial_yu = A(y,\lambda) (\lambda - 3y^2)$ for some smooth function $A = A(y,\lambda)$, which
must then satisfy
$$
A = \left\{ \begin{array}{ll}
\partial_H\tilde u_\circ|_{(-y^3+\lambda y,\lambda)}, & -2\sqrt{\lambda/3} < y < \sqrt{\lambda/3}; \\
\partial_H\tilde u|_{(-y^3+\lambda y,\lambda)}, & \mbox{otherwise.} \end{array} \right.
$$
We thus get that
$\partial_H\tilde{u}_\circ$ extends 
continuously to  $(H = 0, \lambda = 0)$, with the same limit as that of $\partial_H\tilde u$. Similarly one can prove 
the continuity of all partial derivatives. We note that Whitney's extension theorem \cite{Whitney1934} now implies an even stronger
form of differentiability, namely, that $\tilde{u}_{\circ}(H,\lambda)$ admits a smooth extension to an open set, but we do not need this to prove
that $\hat{u}(x,y,\lambda)$ is a smooth function.

Step 4. To show that $\hat{u} = \hat{u}(x,y,\lambda)$ is smooth, it is left to observe that for each
$(x,y,\lambda)$, $\hat{u}(x,y,\lambda) = \tilde{u}(x^2 -y^3+\lambda y, \lambda)$ or $\tilde{u}_\circ(x^2 -y^3+\lambda y, \lambda).$ Indeed, outside the origin
$(0,0,0)$, the smoothness of $\hat{u}$ follows since $\tilde{u}$ and $\tilde{u}_\circ$ are smooth and the restrictions of (the extensions of) the partial derivatives to $(\lambda\ge0, H=2(\lambda/3)^{3/2})$ coincide. Moreover, all of the partial derivatives of $\hat{u}$ will extend continuously to 
$(0,0,0)$ since we have proved in Step 3 that the partial derivatives of $\tilde{u}$ and $\tilde{u}_\circ$ extend continuously to
$(H = 0, \lambda = 0)$. This implies, see for example \cite[Section 3]{Whitney1934},  that $\hat{u} \in C^\infty.$ 
\end{proof}


In Step 1, we used the following lemma.

\begin{lemma} \label{lemma/smoothness}
The function $$
u(y,\lambda) = \eta\int_y^{\mu^y(0,y,\lambda)} \dfrac{g(\eta\sqrt{t^3-\lambda t - y^3+\lambda y},t,\lambda)dt}{2\sqrt{t^3-\lambda t - y^3+\lambda y}},
$$
where $\eta = \textup{sign}(\mu^x(0,y,\lambda))$ and $\lambda \ne 3y^2$, admits a smooth
extension to a neighbourhood of the origin.
\end{lemma}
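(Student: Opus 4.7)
The integrand $g(\eta\sqrt{P(t)},t,\lambda)/(2\sqrt{P(t)})$, with $P(t):=t^3-\lambda t-y^3+\lambda y = (t-y)(t^2+yt+y^2-\lambda)$, has integrable $1/\sqrt{\cdot}$ singularities at each endpoint whenever $\lambda\neq 3y^2$: the lower endpoint $t=y$ is always a simple root of $P$, and the upper endpoint $t=\mu^y(0,y,\lambda)$ is a root of $P$ iff $\mu^x(0,y,\lambda)=0$, which happens iff $(y,\lambda)$ lies on the fixed-point parabola $\{\lambda=3y^2\}$. On that parabola the two singularities coalesce into a non-integrable double zero of $P$ while the interval of integration simultaneously collapses to a point, producing the $0/0$ behaviour that must be resolved in order to extend $u$ smoothly.

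My approach would be to remove these square-root singularities by an explicit change of variable. Using that $\mu$ is smooth, preserves $H$ and $G$, and pointwise fixes the parabola, Hadamard's lemma gives smooth functions $h_1,h_2$ with
\[
\mu^x(0,y,\lambda) = (\lambda-3y^2)\,h_1(y,\lambda), \qquad \mu^y(0,y,\lambda)-y = (\lambda-3y^2)\,h_2(y,\lambda),
\]
and the $H$-invariance identity $(\mu^x)^2=P(\mu^y)$ further constrains these through the relation $h_1^2 = -h_2 + 3y h_2^2 + (\lambda-3y^2)h_2^3$, which in particular forces $h_2\le 0$ near the origin. Substituting first $t = y + (\lambda-3y^2)h_2(y,\lambda)\tau$ rescales the integration to $\tau\in[0,1]$, converts the two endpoint singularities into $1/\sqrt{\tau}$ and $1/\sqrt{1-\tau}$ singularities with coefficients that are smooth in $(y,\lambda)$, and a secondary substitution $\tau=\sin^2(\theta/2)$ produces an integrand that is smooth in $\theta\in[0,\pi]$ and in the parameters $(y,\lambda)$ on a neighbourhood of the origin. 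The integral then reduces to the integration of a smooth function over a fixed interval, whose result is smooth in $(y,\lambda)$ by differentiation under the integral sign.

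The main obstacle is controlling the regularising substitutions uniformly across the vertex $(y,\lambda)=(0,0)$, where the parabola degenerates and all three roots of $P(t)$ coalesce at $t=0$. Here the naive factorisations used to construct the smooth positive square roots appearing after the substitutions can themselves degenerate, because the cofactor $t^2+yt+y^2-\lambda$ in the decomposition of $P$ vanishes at $t=y$ on the parabola. The remedy is to use the Hadamard decomposition above together with the algebraic constraint between $h_1$ and $h_2$ to show that the required positive smooth square roots continue to exist on a full neighbourhood of the origin; once this is verified, smoothness of $u$ on an open neighbourhood of $(0,0)$ follows, completing the lemma.
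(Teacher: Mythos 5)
Your plan is essentially the paper's own proof: the paper also rescales the integration variable by $\nu=\mu^y(0,y,\lambda)-y$ (in one step, via $t=y+z^2\nu$, which simultaneously plays the role of your linear substitution and the secondary square-root substitution), applies Hadamard's lemma to $\nu$ and to $\mu^x$ vanishing on the parabola $\{\lambda=3y^2\}$, and uses the invariance identity $(\mu^x)^2=P(\mu^y)$ to produce the smooth square roots $\eta\sqrt{\nu(3y^2-\lambda)}$ and $\bigl(1+\tfrac{\nu}{3y^2-\lambda}(z^4\nu+3z^2y)\bigr)^{\pm1/2}$ near the origin --- exactly the verification you defer to the end, and your relation $h_1^2=-h_2+3yh_2^2+(\lambda-3y^2)h_2^3$ does deliver it. The only inaccuracy is minor: off the parabola the upper endpoint is a regular point of the integrand (since $P(\mu^y)=(\mu^x)^2>0$ there), so only the lower-endpoint $1/\sqrt{\tau}$ singularity needs removing and the $\tau=\sin^2(\theta/2)$ step is harmless but unnecessary.
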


\begin{proof}

Let $t = y + z^2(\mu^y(0,y,\lambda)-y)$. Denote the difference $\mu^y(0,y,\lambda)-y$ by $\nu$.  Then, for $\nu \ne 0$, 
$$u = \eta\nu\int_0^{1} \dfrac{g(\eta z \sqrt{z^4\nu^3+3z^2\nu^2y+3\nu y^2-\lambda \nu},y+z^2\nu,\lambda)dz}{\sqrt{z^4\nu^3+3z^2\nu^2y+3\nu y^2-\lambda\nu}}.$$
Observe that $\nu (3y^2-\lambda) \ge 0$.
Clearly,
$$z^4\nu^3+3z^2\nu^2y+3\nu y^2-\lambda\nu = \nu (3y^2-\lambda)(1+\dfrac{\nu}{3y^2-\lambda}(z^4\nu+3z^2y))$$
and 
$$
 \dfrac{\eta \nu}{\sqrt{\nu(3y^2-\lambda)}}  = \dfrac{\eta \nu \sqrt{\nu(3y^2-\lambda)}}{\nu (3y^2-\lambda)} = 
 \dfrac{\eta \sqrt{\nu(3y^2-\lambda)}}{3y^2-\lambda}.
$$
Hence for $\lambda \ne 3y^2$ (including the case $\nu = 0, \lambda \ne 3y^2$),
$$u = \dfrac{\eta \sqrt{\nu(3y^2-\lambda)}}{3y^2-\lambda} \int_0^{1} \dfrac{g(\eta z \sqrt{z^4\nu^3+3z^2\nu^2y+3\nu y^2-\lambda \nu},y+z^2\nu,\lambda)dz}{\sqrt{1+\dfrac{\nu}{3y^2-\lambda}(z^4\nu+3z^2y)}}.$$
Now, $\nu = \nu(y,\lambda)$ is a smooth function that is zero on $3y^2=\lambda$. 
By Hadamard's lemma,
$$
\dfrac{\nu}{3y^2-\lambda} \mbox{ and } \big(1+\dfrac{\nu}{3y^2-\lambda}(z^4\nu+3z^2y)\big)^{\pm 1/2},
$$
which are well defined for $\lambda \ne 3y^2$, admit smooth extensions to a small neighbourhood of the origin (when $(y,\lambda)$ are small enough). 

Next, observe that upon substitution of $z = 1$ in the expression
\begin{multline*}
\eta z \sqrt{z^4\nu^3+3z^2\nu^2y+3\nu y^2-\lambda \nu} = \\ \eta z \sqrt{\nu(3y^2-\lambda)} \sqrt{1+\dfrac{\nu}{3y^2-\lambda}(z^4\nu+3z^2y)}
\end{multline*}
we get $\mu^x(0,y,\lambda)$, which is smooth.
It follows that $\eta \sqrt{\nu(3y^2-\lambda)}$ (and hence also the expression itself) is smooth. Moreover,
$\eta \sqrt{\nu(3y^2-\lambda)}$ vanishes when $3y^2-\lambda = 0$ since $\mu^x(0,y,\lambda)$ does. Applying Hadamard's lemma again, we get that
$$
\dfrac{\eta \sqrt{\nu(3y^2-\lambda)}}{3y^2-\lambda}
$$
admits a smooth extension to $\lambda = 3y^2$. We conclude that $u = u(y,\lambda)$ extends to a smooth  function (as a product of functions admitting a smooth extension).
\end{proof}

After we have shown that $\mu$ is the time-1 map of $\hat{u}X_H$, we can consider 
a smooth fiberwise isotopy on $D^3\times[0,\varepsilon] \subset D^4$ 
connecting $\textup{Id}$ with $\mu$ (it is given by the smooth family of vector fields $\alpha(\varphi) \hat{u}X_H$ with $\alpha$ a bump function). This shows the existence of a smooth fibration by circles
of a neighborhood of a parabolic orbit and hence a smooth periodic integral $J$. We have thus proven the following result.

\begin{theorem} \label{theorem/main}
A parabolic orbit of an integrable two-degree of freedom Hamiltonian system $F\colon U\to \mathbb{R}^2$ admits a smooth periodic first integral. More specifically,
there exists a free $F$-preserving $C^\infty$ Hamiltonian circle action in a neighbourhood of such an orbit. \qed
\end{theorem}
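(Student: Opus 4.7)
The plan is to use the preceding theorem to construct a smooth fibration by circles of a neighbourhood of the parabolic orbit $\beta$, and then to obtain the Hamiltonian $J$ by a standard action-integral construction.

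By the preceding theorem, $\mu = \phi^1_{\hat u X_H}$ for some smooth, $F$-invariant function $\hat u$; equivalently, the isotopy $\{\phi^s_{\hat u X_H}\}_{s\in[0,1]}$ smoothly connects $\mathrm{Id}$ to $\mu$ through $F$-preserving diffeomorphisms of the transversal $\{\varphi=0\}$. I would fix a bump function $\alpha\colon\mathbb R\to\mathbb R_{\geq 0}$ supported in a small interval $(0,\varepsilon)$ with $\int\alpha(\varphi)\,d\varphi = 1$, and form the smooth modified vector field
\[
Y = X_G - \alpha(\varphi)\,\hat u(x,y,\lambda)\,X_H
\]
on the local model $D^3\times[0,\varepsilon]$, extended by the $X_G$-flow to a neighbourhood of $\beta$. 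Then $Y$ preserves both $H$ and $G$, and its cumulative $(x,y,\lambda)$-displacement over one pass of the $\varphi$-interval equals $\phi^{-1}_{\hat u X_H} = \mu^{-1}$; composing with the gluing identification $\mu$ around $\beta$ makes the first-return map of $Y$ equal to the identity. Hence every nearby orbit of $Y$ is closed with common period, yielding a smooth, free, $F$-preserving fibration by circles of a neighbourhood of $\beta$. Freeness uses $X_G\neq 0$ along $\beta$, a consequence of $dG|_\beta\neq 0$.

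To upgrade this smooth circle fibration to a Hamiltonian $S^1$-action, I would choose a local primitive $\alpha_0$ of $\omega$ on the (simply connected) local neighbourhood of $\beta$ and set
\[
J(p) = \int_{c_p}\alpha_0,
\]
where $c_p$ denotes the fibre circle through $p$. By the standard action-angle argument, $J$ is a well-defined, smooth, $F$-invariant function whose Hamiltonian vector field generates a free periodic $S^1$-action with orbits coinciding with the fibres. The main concern in this last step is the smoothness of $J$ at the parabolic orbit $\beta$ itself, which reduces to the smoothness of the circle fibration there; this in turn rests entirely on the smoothness of $\hat u$ supplied by the preceding theorem. Thus the final step is essentially a standard consequence of that theorem, and the heavy technical lifting was already done in Theorem 2.1.
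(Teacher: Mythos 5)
Your proposal is correct and follows essentially the same route as the paper: the authors likewise use a bump function $\alpha(\varphi)$ to turn the Hamiltonian isotopy $\phi^s_{\hat u X_H}$ from Theorem 2.1 into a correction of the $X_G$-flow whose first-return map is the identity, obtaining a smooth fibration by circles and hence the smooth periodic integral $J$ via the action integral. The only difference is that you spell out the final action-integral step, which the paper leaves implicit.
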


\section{Smooth structural stability and normal form}

An important consequence of Theorems~\ref{theorem/main}  is the existence of a smooth (`preliminary') normal form of a parabolic 
singularity. Specifically,
we get the following

\begin{theorem} \label{theorem/normal_form}
Let $F = (\tilde{H},G) \colon U \to \mathbb R^2$ be an integrable two-degree of freedom Hamiltonian system with
a parabolic orbit $\beta$. Then there exist: $(i)$ a small  neighbourhood $V \subset U$ of $\beta$ diffeomorphic to a solid torus $D^3 \times S^1$; 
$(ii)$ smooth functions $H$ and $J$ on $V$ that are constant
on the connected components of $F^{-1}(f)$, and 
$(iii)$ coordinates $(x,y,\lambda,\varphi)$ on $D^3 \times S^1$, with $\varphi$ being an angle coordinate,  such that
$$
H = x^2 - y^3 + \lambda y \mbox{ and } J = \lambda
$$
and the symplectic structure has the form 
$$
\omega = g(x,y,\lambda)dx\wedge dy + d\lambda\wedge(d\varphi+A(x,y,\lambda)dx + B(x,y,\lambda)dy).
$$
\end{theorem}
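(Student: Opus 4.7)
The plan is to deduce Theorem \ref{theorem/normal_form} directly from Theorem \ref{theorem/main}: the only missing ingredient in the ``folklore'' smooth normal form for parabolic orbits (mentioned in the Introduction and attributed in the circle-action setting to \cite{Bolsinov2018}) was the existence of a smooth free Hamiltonian circle action near $\beta$, and Theorem \ref{theorem/main} now supplies exactly that.

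First I would invoke Theorem \ref{theorem/main} to obtain a smooth free $F$-preserving Hamiltonian $S^1$-action on a small neighbourhood $V$ of $\beta$, with smooth periodic momentum $J$. Since the action is free and $\beta$ is itself an orbit, $V$ is $S^1$-equivariantly diffeomorphic to $D^3\times S^1$, and I can fix an angle coordinate $\varphi$ on the $S^1$-factor so that the action is translation in $\varphi$. This realises $V$ as a trivial principal $S^1$-bundle and sets up the Poincar\'e cross-section $\Sigma = D^3\times\{0\}$.

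Next I would restrict $(\tilde H, J)$ to $\Sigma$, where the smooth Hamiltonian circle action is now available. The already-established version of the smooth normal form for parabolic points under this hypothesis (as recalled in Section~2 and \cite{Bolsinov2018}) then supplies smooth coordinates $(x,y,\lambda)$ on $\Sigma$ and a smooth first integral $H$, constant on the connected components of $F^{-1}(f)$, with
$$
H = x^2-y^3+\lambda y,\qquad J = \lambda,
$$
and reduced symplectic form $g(x,y,\lambda)\,dx\wedge dy$ on each leaf $\{\lambda=\mathrm{const}\}$. I would then lift $x,y,\lambda$ to $V$ by declaring them $S^1$-invariant, completing the chart $(x,y,\lambda,\varphi)$. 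The $S^1$-invariance of $\omega$ together with the Hamiltonian condition $\iota_{\partial_\varphi}\omega=-d\lambda$ forces $\omega$ to split as
$$
\omega = g(x,y,\lambda)\,dx\wedge dy + d\lambda\wedge\bigl(d\varphi+A(x,y,\lambda)\,dx+B(x,y,\lambda)\,dy\bigr)
$$
for smooth $A,B$, which is exactly the claimed form; here $d\varphi+A\,dx+B\,dy$ plays the role of the connection one-form of the principal bundle trivialised by $\varphi$.

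The main obstacle has already been overcome in Theorem \ref{theorem/main}: producing the smooth $S^1$-action in the first place. What is left is organisational --- citing the circle-action normal form on the three-dimensional slice $\Sigma$ and carrying out the equivariant bookkeeping to recover the full $\omega$. The only mild care required is to check that $H$ can be chosen simultaneously smooth on all of $V$ and constant on connected components of $F^{-1}(f)$; this is automatic, since $\tilde H$ is $S^1$-invariant, so the cross-sectional $H$ extends by invariance to $V$ and yields a first integral of the original system.
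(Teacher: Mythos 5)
Your proposal is correct and follows essentially the same route as the paper: invoke Theorem~\ref{theorem/main} to obtain the smooth periodic integral $J$ and the free circle action, then reduce to the three-dimensional slice (a neighbourhood of a parabolic point), where the normal form is known from \cite{Bolsinov2018}. The paper's own proof is a one-line reduction of exactly this kind; the equivariant bookkeeping you spell out (lifting $x,y,\lambda$ by $S^1$-invariance and recovering $\omega$ from $\iota_{\partial_\varphi}\omega=-d\lambda$) is the content it delegates to the cited references.
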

\begin{proof}
Using the existence of a smooth periodic integral, one can perform the symplectic reduction which reduces the problem to 
 a neighbourhood of a parabolic point, in which case the corresponding results are known; see \cite{Bolsinov2018} and references therein. 
\end{proof}

As a corollary, we get the following stability result.

\begin{corollary} \label{structural/stability}
Let $F  \colon U \to \mathbb R^2$ be an integrable two-degree of freedom Hamiltonian system with
a parabolic orbit $\beta \subset U$. Then every integrable two-degree of freedom system 
$F' \colon U \to \mathbb R^2$ sufficiently close to  $F$ in the $C^\infty$ topology again admits a 
parabolic orbit $\beta' \subset U$ and a smooth periodic integral $J'$. In particular, $F' \colon U \to \mathbb R^2$ is fiberwise $C^\infty$ diffeomorphic
to the unperturbed system in a small neighbourhood of the orbit $\beta'$. \qed
\end{corollary}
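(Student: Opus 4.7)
The plan is as follows. By Theorem~\ref{theorem/normal_form} applied to $F$, fix a small neighbourhood $V \cong D^3\times S^1$ of $\beta$ with coordinates $(x,y,\lambda,\varphi)$ in which the smooth fiber-constant integrals take the standard form $H = x^2-y^3+\lambda y$, $J=\lambda$, so that $\beta = \{x=y=\lambda=0\}$. The first task is to produce, for every integrable $F'$ sufficiently $C^\infty$-close to $F$, a parabolic orbit $\beta'\subset U$ of $F'$ lying near $\beta$. Once this is done, Theorem~\ref{theorem/main} will yield a smooth periodic integral $J'$ of $F'$ in a neighbourhood of $\beta'$, and Theorem~\ref{theorem/normal_form} applied to $F'$ will give coordinates putting $F'$ into exactly the same standard model; the fiberwise $C^\infty$ diffeomorphism claimed in the corollary is then obtained by composing the normal-form charts for $F$ and $F'$.

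To produce $\beta'$, I would work on the cross-section $\Sigma = \{\varphi = 0\} \cong D^3$ and consider the Poincar\'e first return map $\mu$ of the Hamiltonian flow of the periodic integral $G$ of $F$. For the unperturbed system, $\beta\cap\Sigma$ is a single parabolic point $p_0 = (0,0,0)$ characterised by the vanishing of a finite collection of smooth functions built from $\tilde H,G$, their differentials, Poisson brackets, and Hessians, together with the non-degeneracy of further finite-jet determinants encoding the cuspidal type of the critical set of $H|_\Sigma$ at $p_0$; see \cite[Definition 2.1]{Bolsinov2018}. For any integrable $F'$ that is $C^\infty$-close to $F$, the flow of $G'$ is close to that of $G$, so a $C^\infty$-close cross-section $\Sigma'$ exists on which the return map $\mu'$ of $F'$ is defined and $C^\infty$-close to $\mu$ near $p_0$. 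The defining equations for a parabolic point then perturb to a $C^\infty$-close system whose non-degenerate Jacobian yields, via the implicit function theorem, a unique nearby solution $p_0'\in \Sigma'$, and the orbit $\beta' := \bigcup_t \Phi^t_{G'}(p_0')$ is the desired parabolic orbit of $F'$.

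Having obtained $\beta'$, Theorem~\ref{theorem/main} applied directly to the integrable system $F'$ furnishes a smooth, free, $F'$-preserving $C^\infty$ Hamiltonian circle action in a neighbourhood $V'\subset U$ of $\beta'$; its moment map is the required smooth periodic first integral $J'$. Next, Theorem~\ref{theorem/normal_form} applied to $(F,\beta)$ and to $(F',\beta')$ yields coordinates $(x,y,\lambda,\varphi)$ on solid tori $V,V'$ in which both Liouville fibrations are cut out by the same functions $x^2-y^3+\lambda y$ and $\lambda$. Composing these two charts produces a fiberwise $C^\infty$ diffeomorphism from a neighbourhood of $\beta'$ onto a neighbourhood of $\beta$, establishing the last assertion of the corollary.

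The main obstacle is the implicit-function step of the second paragraph: one needs to express the definition of a parabolic orbit as a system of smooth equations plus open non-degeneracy conditions that are indeed stable under $C^\infty$-small integrable perturbations, and to verify that the Jacobian of the equations at $p_0$ is non-singular in directions transverse to $\beta$ within $\Sigma$. This reduces to a finite-jet computation in the explicit normal form above (where the cuspidal condition is encoded by the non-vanishing of the third derivative in $y$ and the transversality of $J=\lambda$), so it is essentially a bookkeeping exercise once the relevant defining conditions from \cite[Definition 2.1]{Bolsinov2018} are written out; nonetheless it is the step where the argument must be carried out carefully. All remaining steps are immediate applications of Theorems~\ref{theorem/main} and~\ref{theorem/normal_form}.
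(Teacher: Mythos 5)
Your proposal is correct and follows essentially the route the paper intends: the paper states this corollary with no written proof, treating it as an immediate consequence of Theorem~\ref{theorem/main} and Theorem~\ref{theorem/normal_form} together with the known persistence of parabolic points (the open/non-degenerate defining conditions of \cite[Definition 2.1]{Bolsinov2018}, as in \cite{Lerman1994, Kalashnikov1998}). Your implicit-function-theorem step for locating $\beta'$ and the subsequent composition of the two normal-form charts is exactly the intended argument, correctly identifying the persistence step as the only part requiring genuine verification.
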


\section{Discussion}

In this paper, we have shown that in a neighbourhood of a parabolic point of a two degree of freedom integrable system $F  \colon U \to \mathbb R^2$, 
every $F$-preserving symplectomorphism is Hamiltonian with a smooth generating function that is  constant on the connected components of $\{F= f\}, f \in \mathbb R^2.$ We deduced from this result the existence of a $C^\infty$
Hamiltonian circle action in a neighbourhood of a parabolic singularity as well as a smooth (`preliminary') normal form and structural
stability results; see Theorem~\ref{theorem/normal_form} and Corollary~\ref{structural/stability}.

We conjecture that more is true in fact, and that `uniform' versions of Theorem~\ref{theorem/normal_form} and Corollary~\ref{structural/stability}
hold as well. In particular, this would imply  that the fiberwise diffeomorphism in Corollary~\ref{structural/stability} can be chosen to be close to the identity. These results would follow from a
`uniform' version of the versality theorem and the continuous dependence of the smooth periodic first integral $J$ on the 
system in the $C^\infty$ topology.

\section{Acknowledgements}

The work of the first author was supported by the Russian Science Foundation (grant No. 17-11-01303).

\bibliographystyle{amsplain}
\bibliography{./library.bib}

\end{document}